\newtheorem{thm}{Theorem}[section]
\newtheorem{prop}[thm]{Proposition}
\theoremstyle{definition}
\newtheorem{dfn}{Definition}[section]
\newtheorem{rmk}[dfn]{Remark}
\newtheorem{exmp}[dfn]{Example}
\newcommand{\conj}[1]{|#1|} 
\numberwithin{equation}{section}
{\begin{center}\begin{minipage}[t]{.8\hsize} \hrule \ovalbox{TODO} #1}%
{\smallskip\hrule \end{minipage}\end{center}\smallskip}%
\title[Partitions of cyclic words]{%
Partitions of cyclic words and Goldman-Turaev Lie bialgebra}
\author{Ryosuke YAMAMOTO}
\address{Faculty of Education, Gunma University, 4-2 Aramaki-machi, Maebashi, Gunma, 371-8510 Japan}
\email{yamryo0202@gunma-u.ac.jp}
\begin{document}

\begin{abstract}
 The free $\mathbb{Z}$-module generated from the set of non-trivial homotopy classes of
 closed curves on an oriented surface has the structure of
 Lie bialgebra by two operations, Goldman bracket and Turaev cobracket.
 M.\ Chas gave a combinatorial redefinition of these operations
 through the identification of the homotopy classes of closed curves
 on the surface
 with the cyclic words associated with the word representation of elements
 of the fundamental group of the surface.
 We present a new approach to give a combinatorial definition of the bracket
 and cobracket, focusing on the information given by the partitions of cyclic words.
\end{abstract}

\maketitle

\section{Introduction}\label{sec:intro}
 Let $\Sigma=\Sigma_{g,1}$ be a compact oriented surface of genus $g \geq 1$
 with a connected boundary,
 and $\hat{\pi}$ the set of (free) homotopy classes of oriented closed curves on $\Sigma$.
 The free $\mathbb{Z}$-module generated from $\hat{\pi}$
 has a structure of Lie algebra
 by an operation called the Goldman bracket
 \cite{goldman1986invariant}.
 Let $1 \in \hat{\pi}$ be the homotopy class of the trivial loop.
 The quotient Lie algebra $\mathbb{Z}{\hat{\pi}}/\mathbb{Z}1$
 has an operation called the Turaev cobracket
 which gives a structure of Lie bialgebra on
 $\mathbb{Z}{\hat{\pi}}/\mathbb{Z}1$ \cite{turaev1991skein}.

 These two operations are deeply related to
 the geometric intersection number of loops on a surface,
 since they both are defined from the intersections of representatives
 of homotopy classes of loops and
 are independent of the choice of the representatives.
 In fact, these operations determine the geometric intersection number of
 certain types of loops, as shown in \cite{chas2004combinatorial}, \cite{chas2015algebraic}.
 In these works
 a ``combinatorial'' definitions of the bracket and cobracket
 introduced by M.\ Chas are used.
 ``Combinatorial'' means as follows:
 The elements of $\hat{\pi}$ can be represented as reduced cyclic words
 (we will recall in the next section).
 Chas redefined the bracket and cobracket on the reduced cyclic words
 using information of the letters of the words.

In this paper, we will discuss a new approach of the combinatorial definition
of the Goldman bracket and the Turaev cobracket.
We will focus on the partitions of cyclic words and
introduce the notion of linking number for a pair of partitions of words.
We then define an operation on (not necessary reduced) cyclic words
which yields a sum of elements in $\mathbb{Z}\hat{\pi}\otimes\mathbb{Z}\hat{\pi}$
such that each element is given from a pair of partitions of a word
with the linking number of the partitions as coefficient.
We will see that this operation gives an new definition of the Turaev cobracket.
The Goldman bracket is redefined in a similar manner.

 This paper is organised as follows:
 In \S \ref{sec:prelim} we will recall the original definitions of
 the Goldman bracket and the Turaev cobracket,
 and prepare some notions about cyclic words
 representing the homotopy classes of free loops on $\Sigma$.
 In \S \ref{sec:linking}
 we organise the connection between the letters and the partitions
 of a cyclic word in the form of a chain complex,
 and introduce the notion of linking number of a pair of partitions of a cyclic word
 (or cyclic words)
 which gives information of the self-intersections
 of a loop on $\Sigma$ corresponding to the word
 (or the intersections of loops on $\Sigma$ corresponding to the words).
 Finally in \S \ref{sec:CGT} we give our combinatorial definitions of
 the Turaev cobracket and the Goldman bracket.
 In addition, we will see that one can confirm the well-definedness of
 the operations as a map on $\mathbb{Z}\hat{\pi}/\mathbb{Z}1$ directly from our definitions.

\section{Preliminaries}\label{sec:prelim}
In this section we first recall the original definitions of two operations of
Goldman-Turaev Lie bialgebra,
and then review the notion of the cyclic words corresponding
to the free homotopy classes of loops on $\Sigma$.

\subsection{Goldman bracket}
 For two homotopy classes of oriented loops $\alpha$ and $\beta$ in $\hat\pi$,
 take representatives $\tilde{\alpha}$ and $\tilde{\beta}$ respectively
 so that $\tilde{\alpha}$ and $\tilde{\beta}$ are in general position,
 i.e., their intersection points are all transversal double points.
 Let $\Gamma(\tilde{\alpha}, \tilde{\beta})$ be the set of all intersection points
 of $\tilde{\alpha}$ with $\tilde{\beta}$,
 and for each $p \in \Gamma(\tilde{\alpha}, \tilde{\beta})$,
 let $\tilde{\alpha}_{p}$ ($\tilde{\beta}_{p}$)
 the loop $\tilde{\alpha}$ ($\tilde{\beta}$ respectively)
 viewed as an oriented loop based at $p$.
 Then the bracket of $\alpha$ and $\beta$ is defined as follows;
 \begin{equation}
  [\alpha, \beta] := \sum_{p \in \Gamma(\tilde{\alpha}, \tilde{\beta})}
   \operatorname{sign}(\tilde{\alpha}_{p}, \tilde{\beta}_{p})
   |\tilde{\alpha}_{p}\tilde{\beta}_{p}|,
 \end{equation}
 where
 $\operatorname{sign}(\tilde{\alpha}_{p}, \tilde{\beta}_{p})$ is $+1$
 if the pair of the tangent vector of $\tilde{\alpha}$ and of $\tilde{\beta}$ at $p$
 form the positive basis of the tangent plane of $\Sigma$ at $p$ and is $-1$ otherwise,
 and $|\tilde{\alpha}_{p}\tilde{\beta}_{p}|$ denotes the (free) homotopy class of
 the based loop $\tilde{\alpha}_{p}\tilde{\beta}_{p}$.
 %
 In \cite{goldman1986invariant}, Goldman proved well-definedness of this operation.
 Linearly expanding this bracket onto $\mathbb{Z}{\hat{\pi}}$,
 we obtain the Goldman bracket
 $[,]: \mathbb{Z}{\hat{\pi}}\otimes \mathbb{Z}{\hat{\pi}} \rightarrow \mathbb{Z}{\hat{\pi}}$.
 \medskip

\subsection{Turaev cobracket}
 For a homotopy classes of oriented loops $\alpha$ in $\hat\pi$,
 take a representative $\tilde{\alpha}$ in general position.
 Let $\Gamma(\tilde{\alpha})$ be the set of all self-intersection points
 of $\tilde{\alpha}$,
 and for each $p \in \Gamma(\tilde{\alpha})$,
 let $\tilde{\alpha}_{p}^{1}$ and $\tilde{\alpha}_{p}^{2}$
 the two oriented loops based at $p$ obtained by dividing $\tilde{\alpha}$ at $p$.
 (Either one can be $\tilde{\alpha}_{p}^{1}$.)
 Then we may define the following operation;
 \begin{equation}
  \Delta(\alpha) := \sum_{p \in \Gamma(\tilde{\alpha})}
   \operatorname{sign}(\tilde{\alpha}_{p}^{1}, \tilde{\alpha}_{p}^{2})
   \left(
    |\tilde{\alpha}_{p}^{1}|\otimes|\tilde{\alpha}_{p}^{2}|
    -|\tilde{\alpha}_{p}^{2}|\otimes|\tilde{\alpha}_{p}^{1}|
   \right),
 \end{equation}
 where
 $\operatorname{sign}(\tilde{\alpha}_{p}^{1}, \tilde{\alpha}_{p}^{2})$
 is the same as above,
 and $|\tilde{\alpha}_{p}^{1}|$ ($|\tilde{\alpha}_{p}^{2}|$)
 also denotes the (free) homotopy class of
 the based loop $\tilde{\alpha}_{p}^{1}$ ($\tilde{\alpha}_{p}^{2}$ respectively).
 %
 %
 Since the value $\Delta(\alpha) \in \mathbb{Z}{\hat{\pi}}$
 has an ambiguity by $1 \in \mathbb{Z}{\hat{\pi}}$,
 we need to take the quotient $\mathbb{Z}{\hat{\pi}}/\mathbb{Z}1$
 to have well-definedness of the operation.
 Linearly expanding this operation onto $\mathbb{Z}{\hat{\pi}}/\mathbb{Z}1$,
 we obtain the Turaev cobracket
 $[,]: \mathbb{Z}{\hat{\pi}}/\mathbb{Z}1 \rightarrow
 \mathbb{Z}{\hat{\pi}}/\mathbb{Z}1\otimes \mathbb{Z}{\hat{\pi}}/\mathbb{Z}1$
  \cite{turaev1991skein}.

\subsection{Words and cyclic words}
Let $\{a_{k}, b_{k}\}_{k=1,2,\dots, g}$ be
a set of $2g$ based loops on $\Sigma$
such that
they are mutually disjoint except for the base-point,
mutually non-parallel
and each pair $(a_{k}, b_{k})$ ($1 \geq k \geq g$)
gives a one-handle part of a handle decomposition of $\Sigma$.
We will also denote by $a_{k}$ (resp.\ $b_{k}$) the based homotopy class of
the based loop $a_{k}$ (resp.\ $b_{k}$).
The fundamental group of the surface $\Sigma$, we will denote it by $\pi$,
is identified with the free group of rank $2g$
with respect to the generators $\{a_{k}, b_{k}\}_{k=1,\dots, g}$.

On the free group $\pi$, the alphabet is the set
$\mathcal{A}=\{a_{k}, a_{k}^{-1}, b_{k}, b_{k}^{-1}\}_{k=1,\dots,g}$,
and the letters are the elements of $\mathcal{A}$.
We denote by $W(\mathcal{A})$ be the set of words on $\mathcal{A}$.
The contraction (resp.\ the cyclic contraction) on a word in $W(\mathcal{A})$ is
an operation that removing successive two letters (resp.\ the first and the last letter)
if they are inverse each other.
The cyclic words on $\mathcal{A}$ are the equivalent classes of $W(\mathcal{A})$
with respect to the equivalence relation
``becomes the same word by contractions and cyclic contractions''.
Throughout this paper we will regard that the words in $W(\mathcal{A})$
as representatives of cyclic words
by considering that the last letter of a word is followed by the first letter.

\section{Partitions of words and their linking numbers}\label{sec:linking}
In this section
we discuss a correspondence between sub-arcs of a free loop and
partitions of the word associated with the loop.
We then introduce the notion of linking numbers of a pair of petitions of words.
%
In the rest of this paper we take the base-point on $\partial\Sigma$ and
generator loops $\{a_{1}, b_{1}, \dots, a_{g}, b_{g}\}$ of $\pi=\pi_{1}(\Sigma)$
as shown in Figure \ref{161552_15Apr21}.
\begin{figure}[h]
 {\unitlength=1mm
 \begin{picture}(0,0)(0,0)
  \put(5,10){$a_{1}$}\put(26,3){$b_{1}$}
  \put(68,10){$a_{g}$}\put(90,3){$b_{g}$}
 \end{picture}}
 \includegraphics[width=.8\hsize, keepaspectratio]{%
 ./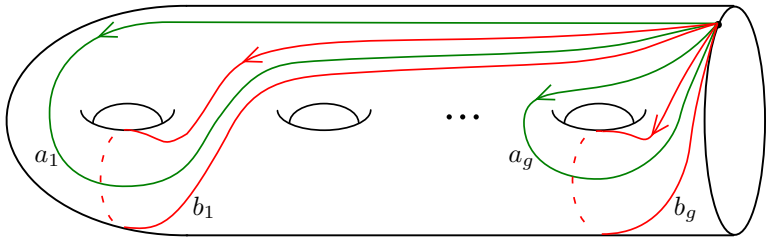} 
 \caption{generators of $\pi$}
 \label{161552_15Apr21}
\end{figure}
Recall that the based loops on $\Sigma$ are represented by the words in $W(\mathcal{A})$,
which consist of the letters of the alphabet
$\mathcal{A}=\{a_{k},a_{k}^{-1},b_{k},b_{k}^{-1}\}_{1\leq k \leq g}$.

\subsection{Partitions of a word}
 We consider the partitions of a word each of which splits the word in two.
 For a word $w=x_{1}x_{2}\dots x_{p}$ in $W(\mathcal{A})$,
 we call the partition between $x_{i}$ and $x_{i+1}$
 the $i$-th partition of $w$, and denote it by $\phi_{i}w$.
 We also consider the $p$-th partition $\phi_{p}w$
 between $x_{p}$ and $x_{1}$ so that
 we may regard the word as representative of a cyclic word.

\subsection{Arc diagram of a word}
We will introduce ``arc diagram'' of a word in $W(\mathcal{A})$
to associate the partitions of the word
with the self-intersections of the corresponding loop on $\Sigma$.

%
Let $D$ be a small disk neighbourhood of the base-point on $\Sigma$,
and $\partial_{0} D$ the (open) subarc of $\partial D$
which is the intersection of $\partial D$ with the interior of $\Sigma$.
We assume that any of the generator loops $a_{1},b_{1},\dots ,a_{g},b_{g}$
intersects just twice with $\partial_{0} D$,
We call the first intersection point at which a generator loop crosses $\partial_{0} D$
while following the loop from the base-point along its direction,
 the gate of the loop,
 and consider that the another intersection point of the generator loop with $\partial_{0} D$
 is the gate of the inverse of the generator loop.
We give the orientation on $\partial_{0} D$
that matches with the direction from the gate of $a_{1}$ to the gate of $b_{1}$.
For two distinct gates $x$ and $y$ on $\partial_{0} D$,
we express as $x<y$ if the direction from $x$ to $y$ along $\partial_{0} D$
coincides the orientation of $\partial_{0} D$,
and as $x>y$ otherwise.
%
Relating to the order of the gates of the generator loops
and their inverses,
we give the following order on the alphabet
$\mathcal{A}$;
\begin{equation*}
 a_{1} < b_{1} < a_{1}^{-1} < b_{1}^{-1} <
 a_{2} < b_{2} < a_{2}^{-1} < b_{2}^{-1} < \dots <
 a_{g} < b_{g} < a_{g}^{-1} < b_{g}^{-1}.
\end{equation*}
%

For each word $w=x_{1}x_{2}\dots x_{p} \in W(\mathcal{A})$,
we will fix a way of choosing a based loop on $\Sigma$ represented by $w$
as follows, and denote the chosen loop by $\ell_{b}(w)$:
\begin{enumerate}[i)]
 \item The based loop $\ell_{b}(w)$ is the union of based loops mutually disjoint
       except for the base-point each of which
       corresponds to each of the letters of $w$.
 \item Let the letter $x_{i}$ denote the gate of the sub-loop of $\ell_{b}(w)$
       corresponding $x_{i}$,
       and $x_{i}^{-1}$ the gate of the inverse of the sub-loop.
       The gates $x_{1}, \dots, x_{p}$
       and the gates $x_{1}^{-1}, \dots, x_{p}^{-1}$
       are lined up on $\partial_{0}D$ by the following rule:
       Two gates $x_{i}^{\varepsilon}$ and $x_{j}^{\delta}$
       ($i < j$, $\varepsilon, \delta \in \{-1, 1\}$)
       are in the order of $\mathcal{A}$ if these letters are distinct.
       In the case where these letters are the same,
       they are in order of $x_{i}^{\varepsilon} < x_{j}^{\delta}$
       if the letter is $a_{k}$ or $b_{k}$,
       and $x_{i}^{\varepsilon} > x_{j}^{\delta}$ otherwise.
\end{enumerate}

By altering $\ell_{b}(w)$ within $D$ so that each proper arc connects gates directly,
without touching the base-point,
we now obtain a free loop on $\Sigma$,
which is (freely) homotopic to $\ell_{b}(w)$.
We denote it by $\ell(w)$.

We call the disk $D$ with proper arcs $\ell(w) \cap D$
the arc diagram of $w$ (See Figure \ref{135131_15Dec21}(left)).
 Because of obvious one-to-one correspondence
 between the partitions of $w$ and the proper arcs on the arc diagram of $w$,
 we also denote by $\phi_{i}w$ the simple oriented proper arc
 from the gate $x_{i}^{-1}$ to the gate $x_{i+1}$.
 ($\phi_{p}w$ is the arc connecting $x_{p}^{-1}$ and $x_{1}$.)

  We also define the arc diagram for the pair of words
   to use for redefinition of the Goldman bracket in \S \ref{sec:CGT}.
  The arc diagram of a pair of words
  $v=x_{1}x_{2}\dots x_{p}$ and $w=y_{1}y_{2}\dots y_{q}$
  is the arc diagram of $v$ overlapping with the arc diagram of $w$
  such that
  $\partial D$, $\partial_{0} D$ and the base-point
  of two diagrams are identified each,
  and
  the gates of $v$ and the gates of $w$ are lined up on $\partial_{0} D$
  in the following order:
  A gate $x_{i}^{\varepsilon}$ of $v$ and a gate $y_{j}^{\delta}$ of $w$
  ($1\leq i \leq p$, $1\leq j \leq q$, $\varepsilon, \delta \in \{-1,1\}$)
  are in the order of $\mathcal{A}$ if their letters are distinct.
  In the case where these letters are the same,
  they are in order of $x_{i}^{\varepsilon} < y_{j}^{\delta}$
  if the letter is $a_{k}$ or $b_{k}$,
  and $x_{i}^{\varepsilon} > y_{j}^{\delta}$ otherwise
  (See Figure \ref{135131_15Dec21}(right)).
 Note that the order between a gate of $v$ and a gate of $w$ does not depend on
 their positions on each word.

\begin{figure}[h]
 {\unitlength=1mm
 \begin{picture}(50,30)(5,0)
  \put(0,0){\includegraphics[height=25mm]{%
  ./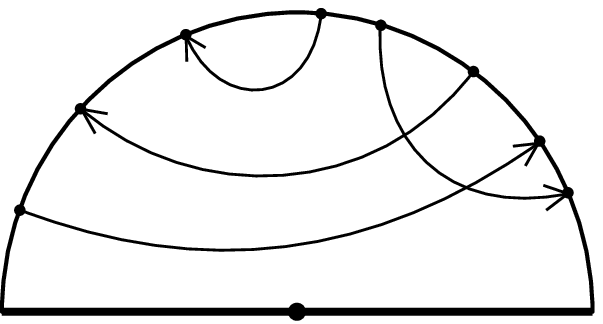}} 
  \put(47, 10){$a_{1}$}
  \put(45, 15){$a_{1}$}
  \put(38, 23){$b_{1}$}
  \put(30, 27){$a_{1}^{-1}$}
  \put(24, 28){$a_{1}^{-1}$}
  \put(12, 26){$b_{1}^{-1}$}
  \put(2, 19){$a_{2}$}
  \put(-4, 10){$a_{2}^{-1}$}
 \end{picture}
 \begin{picture}(50,30)(-10,0)
  \put(0,0){\includegraphics[height=25mm]{%
  ./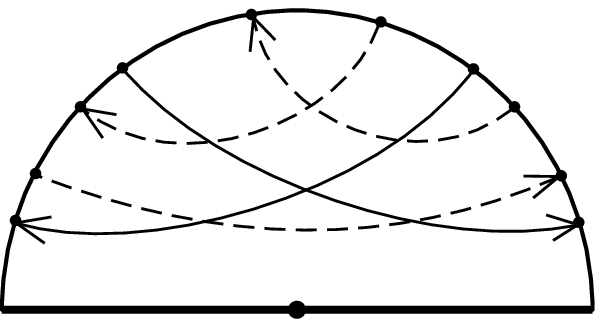}} 
  \put(48, 7){$a_{1}$}
  \put(47, 12){$a_{1}$}
  \put(43, 18){$a_{1}^{-1}$}
  \put(38, 23){$a_{1}^{-1}$}
  \put(30, 27){$b_{1}$}
  \put(18, 28){$b_{1}^{-1}$}
  \put(6, 23){$a_{2}$}
  \put(1, 18){$a_{2}$}
  \put(-4, 13){$a_{2}^{-1}$}
  \put(-6, 7){$a_{2}^{-1}$}
 \end{picture}
 }
 \caption{The arc diagram of a word $a_{1}b_{1}^{-1}a_{2}a_{1}$ (left)
 and the arc diagram of a pair of words $a_{1}a_{2}^{-1}$ and $a_{1}b_{1}^{-1}a_{2}$ (right).}
 \label{135131_15Dec21}
\end{figure}

\subsection{Linking number of partitions of words}
Let $D$ be an arc diagram (of a word or a pair of words).
We denote the set of gates of $D$ by $G(D)$,
the oriented proper arcs on $D$ by $A(D)$,
and the arc in $A(D)$ from a gate $x$ to a gate $y$ by $(xy)$.
We consider a chain complex obtained from $D$ as follows:
The degree $0$ chain group $C_{0}(D)$ is a free $\mathbb{Z}$-module
generated from $G(D)$
and the degree $1$ chain group $C_{1}(D)$ is a free $\mathbb{Z}$-module
generated from $A(D)$.
The chain groups of degree more than $2$ are all $0$.
The boundary map $\partial=\partial_{1}: C_{1}(D) \rightarrow C_{0}(D)$ is defined by
$\partial(xy) = y - x$
(with linearly expanding on $C_{1}(D)$),
and the boundary maps of other degree are all zero map.

On $C_{0}(D)$
we give
a bi-linear form
directly associated with the order of the gates on $\partial_{0}D$.
\begin{dfn}\label{dfn:cdot}
 A bi-linear alternating form $\cdot : C_{0}(D)\times C_{0}(D) \rightarrow \mathbb{Z}$
 is defined by
 \begin{equation*}
  x\cdot y
   :=\begin{cases}
      +1 & (x < y) \\
      0 & (x = y) \\
      -1 & (x > y) 
     \end{cases},~
     \forall x,y \in G(D).
 \end{equation*}
\end{dfn}
\begin{rmk}
 \begin{enumerate}[i)]
  \item We may apply the operation $\cdot$ to the letters of a word $w$
        (and their inverses) directly
        without bringing up the arc diagram of the word.
        Namely we may define the operation $\cdot$ as follows:
        For the $i$-th letter $x_{i}$ and the $j$-th letter $x_{j}$ ($1 \leq i < j \leq p$)
        of a word $w=x_{1}x_{2}\dots x_{p}$,
        we define
         \begin{equation*}
          x_{i}^{\epsilon}\cdot x_{j}^{\delta}
           = - x_{j}^{\delta} \cdot x_{i}^{\epsilon}
           =\begin{cases}
              +1 & (x_{i}^{\epsilon} < x_{j}^{\delta}
             \text{~or~} x_{i}^{\epsilon} = x_{j}^{\delta}
              \in \{a_{k}, b_{k}\}_{1 \leq k \leq g}) \\
              -1 & (x_{i}^{\epsilon} > x_{j}^{\delta}
             \text{~or~} x_{i}^{\epsilon} = x_{j}^{\delta}
              \in \{a_{k}^{-1}, b_{k}^{-1}\}_{1 \leq k \leq g}) \\
             \end{cases},
         \end{equation*}
        where $\epsilon$ and $\delta$ are $\pm 1$,
        and the inequalities are of the order of $\mathcal{A}$.
  \item On the definition of the operation $\cdot$
        we may assign any value to $x\cdot x$,
        since the operation $\cdot$ is not performed on the same two gates
        throughout this paper.
 \end{enumerate}
\end{rmk}
 We now introduce the notion of the linking number of a pair of proper arcs on $D$,
 which define a bi-linear alternating form on $C_{1}(D)$.
\begin{dfn}
 The linking number $\operatorname{lk}(\alpha, \beta)$
 of $\alpha$ and $\beta$ in $A(D)$
 is defined as
 \begin{equation*}
  \operatorname{lk}(\alpha, \beta)
   = \frac{1}{2}\partial{\alpha}\cdot\partial{\beta}.
 \end{equation*}
\end{dfn}
This defines
the linking number of a pair of partitions of a word
through the identification between the proper arcs of the arc-diagram
and the partitions of the word.
Namely for the $i$-th partition $\phi_{i}w$ and the $j$-th partition $\phi_{j}w$
of a word $w=x_{1}\dots x_{p}$
we may
define the linking number $\operatorname{lk}(\phi_{i}w, \phi_{j}w)$
as
\begin{equation*}
 \operatorname{lk}(\phi_{i}w, \phi_{j}w)
  = \frac{1}{2}\partial\phi_{i}w{\cdot}\partial\phi_{j}w
  = \frac{1}{2}(x_{i+1}-x_{i}^{-1})\cdot (x_{j+1}-x_{j}^{-1}).
\end{equation*}

\begin{exmp}
 For a word $w=a_{1}b_{1}^{-1}a_{2}a_{1}$,
 \begin{eqnarray*}
  \operatorname{lk}(\phi_{1}w, \phi_{3}w)
  &=& \frac{1}{2}(b_{1}^{-1}-a_{1}^{-1})\cdot(a_{1}-a_{2}^{-1}) \\
   &=& \frac{1}{2}(b_{1}^{-1}\cdot a_{1}-b_{1}^{-1}\cdot a_{2}^{-1}
   -a_{1}^{-1}\cdot a_{1} + a_{1}^{-1}\cdot a_{2}^{-1}) \\
  &=& \frac{1}{2}((-1)-(+1)-(-1)+(+1)) 
  ~=~ 0,
 \end{eqnarray*}
 and
 \begin{eqnarray*}
  \operatorname{lk}(\phi_{3}w, \phi_{4}w)
  &=& \frac{1}{2}(a_{1}-a_{2}^{-1})\cdot (a_{1}-a_{1}^{-1})\\
  &=& \frac{1}{2}(a_{1,(4)}\cdot a_{1,(1)}-a_{1}\cdot a_{1}^{-1}
   -a_{2}^{-1}\cdot a_{1} + a_{2}^{-1}\cdot a_{1}^{-1}) \\
  &=& \frac{1}{2}((-1)-(+1)-(-1)+(-1)) 
  ~=~ -1,
 \end{eqnarray*}
 where the subscript $(4)$ in $a_{1,(4)}$ indicates that
 this $a_{1}$ is the $4$-th letter of $w$,
 and the same for $a_{1,(1)}$.
\end{exmp}

\begin{prop}\label{prp:linking}
 The linking number of a pair of proper arcs on an arc-diagram
 can take one of the values of $-1,0$ or $1$ and
 it is equal to the algebraic intersection number of the pair.
\end{prop}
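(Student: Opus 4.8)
The plan is to work entirely inside a single arc diagram $D$ and to compute the pairing $\tfrac12\partial\alpha\cdot\partial\beta$ explicitly for two chords in terms of the linear order of their four endpoints on $\partial_{0}D$, then to match the outcome with the signed transversal crossing of the two chords in the disk. Writing $\alpha=(x_{1}y_{1})$ and $\beta=(x_{2}y_{2})$, so that $\partial\alpha=y_{1}-x_{1}$ and $\partial\beta=y_{2}-x_{2}$, I would expand
\begin{equation*}
 \operatorname{lk}(\alpha,\beta)=\tfrac12\bigl(y_{1}\cdot y_{2}-y_{1}\cdot x_{2}-x_{1}\cdot y_{2}+x_{1}\cdot x_{2}\bigr).
\end{equation*}
First I would reduce to the generic situation in which the four gates $x_{1},y_{1},x_{2},y_{2}$ are pairwise distinct. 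For two distinct proper arcs of an arc diagram this is automatic: the tail $x_{i}^{-1}$ and the head $x_{i+1}$ of $\phi_{i}w$ are gates attached to distinct sub-loops, and likewise for a pair of words, so the pairing $\cdot$ is never evaluated on a coincident pair (cf.\ the Remark following Definition \ref{dfn:cdot}). Under this hypothesis each of the four summands above equals $\pm1$, so a priori $\operatorname{lk}(\alpha,\beta)\in\{-2,-1,0,1,2\}$.

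Next I would rule out the extreme values $\pm2$. If the sum equalled $+4$ before halving, then unwinding the four sign conditions forces $x_{2}<y_{1}<y_{2}<x_{1}<x_{2}$ in the linear order of $\partial_{0}D$, which is impossible; the case $-4$ is symmetric by the alternating property of $\cdot$. This already establishes the first assertion, that $\operatorname{lk}(\alpha,\beta)\in\{-1,0,1\}$.

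Finally I would identify the value with the algebraic intersection number. Since each proper arc connects its gates directly, two such chords in the disk meet transversally in at most one point, and they meet if and only if their endpoints interleave along $\partial_{0}D$, i.e.\ exactly one of $x_{2},y_{2}$ lies strictly between $x_{1}$ and $y_{1}$. I would verify the two complementary cases directly from the expansion: when the endpoints do not interleave the four summands cancel in pairs and $\operatorname{lk}(\alpha,\beta)=0$, matching the empty intersection; when they interleave the sum is $\pm2$, giving $\operatorname{lk}(\alpha,\beta)=\pm1$. The hard part will be pinning down the sign. One must confirm that, with $\partial_{0}D$ oriented as fixed in Figure \ref{161552_15Apr21} and the induced orientation of $\Sigma$, the value $+1$ produced by the configuration $x_{1}<x_{2}<y_{1}<y_{2}$ is precisely the sign $\operatorname{sign}$ of the transversal crossing of the oriented chords $x_{1}\to y_{1}$ and $x_{2}\to y_{2}$. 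I would settle this by an explicit frame computation in this single representative interleaved configuration; the opposite interleaving is obtained by interchanging $\alpha$ and $\beta$, and since both $\operatorname{lk}$ and the geometric intersection number are alternating, the correct sign then propagates to every remaining case.
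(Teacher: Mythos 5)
Your proposal is correct in substance and follows essentially the same route as the paper's proof: fix four pairwise distinct gates, expand $\tfrac12\partial\alpha\cdot\partial\beta$ into the four $\pm1$ summands, and run a case analysis on the linear order of the endpoints along $\partial_{0}D$, matching non-interleaved orders with $0$ and interleaved orders with the crossing sign. The paper merely organizes the same computation differently, grouping the four terms into two ``betweenness'' indicators $X=x\cdot w-x\cdot z$ and $Y=y\cdot w-y\cdot z$, each valued in $\{-2,0,+2\}$ according to whether $x$ (resp.\ $y$) lies between the endpoints of the other arc; your exclusion of a total of $\pm4$ via the cyclic contradiction $x_{2}<y_{1}<y_{2}<x_{1}<x_{2}$ is the same mechanism in different bookkeeping. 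Your preliminary observation that the four gates of two distinct arcs are automatically distinct is also sound, since each letter occurrence of the word contributes its own pair of distinct intersection points with $\partial_{0}D$.

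The one step that fails as stated is the final sign propagation. There are eight interleaved orderings of the four endpoints, and interchanging $\alpha$ and $\beta$ is a fixed-point-free involution on them, so antisymmetry together with your single representative $x_{1}<x_{2}<y_{1}<y_{2}$ certifies only that configuration and its swap $x_{2}<x_{1}<y_{2}<y_{1}$; configurations such as $x_{1}<y_{2}<y_{1}<x_{2}$ or $y_{1}<x_{2}<x_{1}<y_{2}$ are not reached by the swap, so ``propagates to every remaining case'' is an overclaim. The patch is immediate: reversing the orientation of one chord negates both sides of the desired identity (it negates $\partial$ of that chord, hence $\operatorname{lk}$, and it reverses one tangent vector at the crossing, hence the geometric sign), and the group generated by the swap and the two orientation reversals acts transitively on all eight interleaved configurations, reducing everything to your representative. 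Alternatively, simply evaluate the four summands in the remaining patterns directly, each a one-line check --- which is in effect what the paper does when it dismisses ``the other cases'' as straightforward after treating the vanishing ones and noting two orders are impossible. With that repair your argument is complete and equivalent to the paper's proof of Proposition \ref{prp:linking}.
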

\begin{proof}
 Let $D$ be an arc-diagram and $x,y,z,w$ distinct four gates in $G(D)$.
 We may assume that two proper arcs $(xy)$ and $(zw)$ have
 at most one intersection point.
 We will show that
 the value $\frac{1}{2}\partial(xy)\cdot \partial(zw)$ is equal to
 the sign of the intersection point of the arcs if they intersect
 and is equal to $0$ otherwise.

 Setting $X = x\cdot w-x\cdot z$ and $Y = y \cdot w - y \cdot z$,
 we have $\frac{1}{2}\partial(xy)\cdot\partial(zw) = \frac{1}{2}\left(X-Y\right)$.
 Note that
 \begin{equation*}
   X =
   \begin{cases}
    -2 & (w < x < z) \\
    +2 & (z < x < w) \\
    0 & (\text {otherwise}) 
   \end{cases},~~
  Y =
   \begin{cases}
    -2 & (w < y < z) \\
    +2 & (z < y < w) \\
    0 & (\text {otherwise}) 
   \end{cases}.
 \end{equation*}
 In the case where $w < x < z$ and $w < y < z$,
 the linking number
 $\frac{1}{2}\partial(xy)\cdot\partial(zw)
 =\frac{1}{2}\left(-2-(-2)\right)=0$
 and the arcs $(xy)$ and $(zw)$ have no intersection.
 It is the same in the case where $z < x < w$ and $z < y < w$.
 The case where $w < y < z$ and $z < x < w$
 and the case where $z < y < w$ and $w < x < z$ never occur.
 In the other cases, it is also straightforward to see that the linking number
 and the sign of the intersection of the arcs match.
\end{proof}

\section{Combinatorial Goldman-Turaev Lie bialgebra}\label{sec:CGT}
In this section we give new combinatorial definitions of
the Turaev cobracket and Goldman bracket.
For $w \in W(\mathcal{A})$, we denote by
$\conj{w}$ the conjugacy class of the element in $\pi=W(\mathcal{A})/\text{contraction}$
represented by $w$,
which is identified with the element of $\hat{\pi}$ represented by
the (free) loop $\ell(w)$ on $\Sigma$ mentioned in \S \ref{sec:linking}.

\subsection{Turaev cobracket}
For $w \in W(\mathcal{A})$ of length $p$,
let $w_{s,t}$ denote its sub-word
from the $s$-th letter to the $t$-th letter if $s \leq t$,
 and define $w_{s,t}$ as $w_{s,p}w_{1,t}$ if $s > t$.
 The index numbers are considered in modulo $p$.
\begin{dfn}
 We define a map
 $\delta: W(\mathcal{A}) \rightarrow \mathbb{Z}\hat\pi \otimes \mathbb{Z}\hat\pi$
 as follows:
 For $\forall w \in W(\mathcal{A})$ of length $p$,
 if $p=1$ we set $\delta(w)=0$
 and otherwise
 \begin{equation*}
  \delta(w)
 = \sum_{1 \leq i < j \leq p}
    \operatorname{lk}(\phi_{i}w, \phi_{j}w)
     \left(
     \conj{w_{i+1,j}}\otimes \conj{w_{j+1,i}} - \conj{w_{j+1,i}} \otimes \conj{w_{i+1,j}}
     \right).
 \end{equation*}
\end{dfn}
 \begin{thm}\label{thm:cobracket}
  For any word $w$ in $W(\mathcal{A})$,
  $\delta(w)=\Delta(|\ell(w)|)$.
 \end{thm}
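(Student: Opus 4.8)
The plan is to show that the combinatorial sum $\delta(w)$ reproduces the topological cobracket $\Delta(|\ell(w)|)$ by matching the two sums term-by-term over self-intersection points of the loop $\ell(w)$. The fundamental bridge is the arc diagram: the self-intersections of $\ell(w)$ occur precisely where two proper arcs of the diagram cross inside $D$ (away from $D$ the subloops are mutually disjoint by construction of $\ell_b(w)$), and each such crossing is an intersection of $\phi_i w$ with $\phi_j w$ for a unique pair $i<j$. So the first step is to establish a bijection between $\Gamma(\ell(w))$, the self-intersection set appearing in the definition of $\Delta$, and the set of pairs $\{(i,j) : i<j,\ \operatorname{lk}(\phi_i w,\phi_j w)\neq 0\}$. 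By Proposition~\ref{prp:linking}, $\operatorname{lk}(\phi_i w,\phi_j w)$ is exactly the algebraic intersection number of the two arcs, so a nonzero linking number detects precisely a transverse crossing and its sign.

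Next I would verify that, at such a crossing $p$ coming from the pair $(i,j)$, the two based loops $\tilde\alpha_p^1$ and $\tilde\alpha_p^2$ obtained by splitting $\ell(w)$ at $p$ are freely homotopic to the loops represented by the subwords $w_{i+1,j}$ and $w_{j+1,i}$. This is a reading-off argument: cutting the cyclic word $w$ at the two partition points $\phi_i w$ and $\phi_j w$ separates the cyclic sequence of letters into the two complementary arcs $x_{i+1}\dots x_j$ and $x_{j+1}\dots x_i$, and tracing the loop along the diagram from the crossing shows that the two based loops spell out exactly these subwords. Hence $|\tilde\alpha_p^1|=|w_{i+1,j}|$ and $|\tilde\alpha_p^2|=|w_{j+1,i}|$ as elements of $\hat\pi$ (here one uses that $|w|$ is the conjugacy class, so the free homotopy class is insensitive to where the based loop begins).

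It then remains to match the signs. Under the bijection, the sign $\operatorname{sign}(\tilde\alpha_p^1,\tilde\alpha_p^2)$ of the crossing in Turaev's definition must be shown to equal $\operatorname{lk}(\phi_i w,\phi_j w)$, again invoking Proposition~\ref{prp:linking} together with a check that the ordering convention $i<j$ for the partitions is compatible with the choice of which branch is $\tilde\alpha_p^1$ and which is $\tilde\alpha_p^2$ in the antisymmetric expression $|\cdot|\otimes|\cdot| - |\cdot|\otimes|\cdot|$. Since both $\Delta$ and $\delta$ are antisymmetrized in the two factors, the assignment of labels to branches is immaterial provided the sign convention is tracked consistently; this is where I would be most careful. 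Assembling these pieces, each summand $\operatorname{lk}(\phi_i w,\phi_j w)(|w_{i+1,j}|\otimes|w_{j+1,i}| - |w_{j+1,i}|\otimes|w_{i+1,j}|)$ of $\delta(w)$ corresponds to the summand $\operatorname{sign}(\tilde\alpha_p^1,\tilde\alpha_p^2)(|\tilde\alpha_p^1|\otimes|\tilde\alpha_p^2| - |\tilde\alpha_p^2|\otimes|\tilde\alpha_p^1|)$ of $\Delta(|\ell(w)|)$, and pairs with zero linking number contribute nothing on either side, yielding the equality.

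The main obstacle I anticipate is the sign and orientation bookkeeping in the last step: one must confirm that the tangent-vector sign convention defining $\operatorname{sign}$ on $\Sigma$ is consistent with the combinatorial order on gates of $\partial_0 D$ that defines the form $\cdot$, and in particular that the positive orientation of $\Sigma$ at the crossing is correctly encoded by the ordering $a_1<b_1<a_1^{-1}<\dots$ declared on $\mathcal{A}$. The geometric homotopy reading-off of the subwords (the second step) is conceptually the heart of the argument but is essentially forced once the arc diagram is set up; the delicate part is ensuring no orientation is flipped when passing between the ambient surface and the local picture inside $D$.
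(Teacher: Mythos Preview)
Your proposal is correct and follows essentially the same route as the paper: both arguments locate all self-intersections of $\ell(w)$ inside the arc diagram, invoke Proposition~\ref{prp:linking} to match $\operatorname{lk}(\phi_iw,\phi_jw)$ with the sign of the crossing (and to kill the non-crossing pairs), and then read off the two split loops as the subwords $w_{i+1,j}$ and $w_{j+1,i}$. The paper's version is terser about the orientation bookkeeping you flag as the delicate point, treating it as immediate from Proposition~\ref{prp:linking}.
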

 \begin{proof}
  Note that all self-intersection points of the loop $\ell(w)$
  of a word $w \in W(\mathcal{A})$
  are displayed in the arc diagram of $w$.
  Let $\ell(w)^{1}_{x}$ and $\ell(w)^{2}_{x}$ be the oriented loops based
  at $x \in \Gamma(\ell(w))$
  obtained by dividing $\ell(w)$ at $x$ so that
  $\ell(w)^{1}_{x}$ is freely homotopic to $\ell(w_{i+1,j})$ and
  $\ell(w)^{2}_{x}$ is to $\ell(w_{j+1,i})$.
  We can calculate $\Delta(|\ell(w)|)$ as
  \begin{equation*}
   \Delta(|\ell(w)|) = \sum_{x \in \Gamma(\ell(w))}
    \operatorname{sign}(\ell(w)^{1}_{x}, \ell(w)^{2}_{x})
    (|\ell(w)^{1}_{x}|\otimes|\ell(w)^{2}_{x}|
    -|\ell(w)^{2}_{x}|\otimes|\ell(w)^{1}_{x}|).
  \end{equation*}
  It is immediate from Proposition \ref{prp:linking} that
  \begin{equation*}
   \operatorname{sign}(\ell(w)^{1}_{x}, \ell(w)^{2}_{x})
    = \operatorname{lk}(\phi_{i}w, \phi_{j}w)
  \end{equation*}
  for the pair of partitions $(\phi_{i}w, \phi_{j}w)$ such that they intersect at $x$,
  and one can see that the (free) homotopy class of $\ell(w)^{1}_{x}$ and $\ell(w)^{2}_{x}$
  are respectively identified with $|w_{i+1,j}|$ and $|w_{j+1,i}|$.
  We also know from Proposition \ref{prp:linking} that
  the value $\operatorname{lk}(\phi_{i}w, \phi_{j}w)=0$
  for the pairs of partitions of $w$ which has no intersections.
  This completes the proof.
 \end{proof}
 It follows from Theorem \ref{thm:cobracket} that
 the map $\delta$ induces a map from the set of cyclic words on $\mathcal{A}$
 to $\mathbb{Z}\hat\pi/\mathbb{Z}1 \otimes \mathbb{Z}\hat\pi/\mathbb{Z}1$
 and gives a new combinatorial definition of Turaev cobracket
 through identifying the set of cyclic words with $\hat{\pi}$
 and linearly expanding $\delta$ onto $\mathbb{Z}\hat{\pi}$.
\begin{rmk}\label{rmk:well-definedness}
 From our definition of $\delta$ we may directly show the well-definedness
 of $\delta$ as a map from the set of cyclic words on $\mathcal{A}$
 to $\mathbb{Z}\hat\pi/\mathbb{Z}1 \otimes \mathbb{Z}\hat\pi/\mathbb{Z}1$
 as follows:

 We first note that one can see the invariance of $\delta(w)$
 under the cyclic permutation by the following simple observation.
 Let $w'$ be a word given from a word $w=x_{1}x_{2}\dots x_{p}$ by one cyclic permutation,
 i.e., $w'=x_{2}\dots x_{p}x_{1}$.
 Then the partition $\phi_{1}w$ of $w$ turns into the partition $\phi_{p}w'$ of $w'$.
 This causes the inversion of coefficients of terms in $\delta(w)$
 which are given by $\phi_{1}w$ with other partitions
 i.e.,
 $\partial\phi_{1}w{\cdot}\partial\phi_{i}w
 = (-1) \partial\phi_{i-1}w'{\cdot}\partial\phi_{p}w'$,
 while the wedge part $w_{1i} \wedge w_{i1}$ of the terms
 turn into $w'_{i-1,p} \wedge w'_{1i-1}= - w_{1i} \wedge w_{i1}$.
 Therefore $\delta(w')=\delta(w)$.

 Now all that remains is to show $\delta(x^{-1}wx)=\delta(w)$
 for $\forall x \in \mathcal{A}$ and
 $\forall w=x_{1}x_{2}\dots x_{p} \in W(\mathcal{A})$.
 We denote $x^{-1}wx$ by ${}^{x}w$
 and identify $|u| \otimes |v| - |v| \otimes |u|$
 with $|u| \wedge |v|$
 for short.
 Note the following three simple facts in advance;
 \begin{enumerate}[i)]
  \item $\partial\phi_{p+2}({}^{x}w) = x^{-1}-x^{-1}=0$,
  \item $|w| \wedge |xx^{-1}| = 0$ in $\mathbb{Z}\hat{\pi}/\mathbb{Z}1$,
        because of $|xx^{-1}|=1$,
  \item $\partial\phi_{1}({}^{x}w)+\partial\phi_{p+1}({}^{x}w)
        = (x_{1}^{-1}-x^{-1})+(x^{-1}-x_{p})
        = \partial\phi_{p}w$.
 \end{enumerate}
 Using these facts we have
 \begin{eqnarray*}
  \delta({}^{x}w)
  &=&
  \sum_{2 \leq j \leq p}
  \operatorname{lk}(\phi_{1}({}^{x}w),\phi_{j}({}^{x}w)) 
  |w_{1,j-1}| \wedge |w_{j,p}xx^{-1}| \\
  && + \operatorname{lk}(\phi_{1}({}^{x}w), \phi_{p+1}({}^{x}w)) 
   |w|\wedge |xx^{-1}|
   + \sum_{2 \leq i \leq p}
   \operatorname{lk}(\phi_{i}({}^{x}w), \phi_{p+1}({}^{x}w)) 
       |w_{i,p}| \wedge |xw_{1,i-1}x^{-1}|
        \\
  && + \sum_{2 \leq i < j \leq p}
  \operatorname{lk}(\phi_{i}({}^{x}w), \phi_{j}({}^{x}w))
  |w_{i,j-1}|\wedge |w_{j,p}xx^{-1}w_{1,i-1}| \\
  && +\sum_{1 \leq i \leq p+1}
  \operatorname{lk}(\phi_{i}({}^{x}w), \phi_{p+2}({}^{x}w)) 
   |w_{i,p}x| \wedge |x^{-1}w_{1,i-1}| \\
  &=& \sum_{2 \leq i \leq p}
  \frac{\partial\phi_{i-1}w{\cdot}
  \left\{
   \partial\phi_{1}({}^{x}w)+\partial\phi_{p}({}^{x}w)
  \right\}}{2}
  |w_{i,p}| \wedge |w_{1,i-1}| \\
  && + \sum_{2 \leq i < j \leq p}
   \operatorname{lk}(\phi_{i-1}w, \phi_{j-1}w)
  |w_{i,j-1}|\wedge |w_{j,i-1}| \\ 
  &=& \delta(w).
 \end{eqnarray*}
\end{rmk}
\subsection{Goldman bracket}

Let $\nu$ be the cyclic permutation on the words in $W(\mathcal{A})$
given by $\nu(x_{1}x_{2}\dots x_{p})=x_{2}\dots x_{p}x_{1}$ ($x_{i} \in \mathcal{A}$).
\begin{dfn}
 We define a map
 $\braket{}: W(\mathcal{A}) \times W(\mathcal{A}) \rightarrow \mathbb{Z}\hat\pi$
 as follows:
 For any words $v$ and $w$ in $W(\mathcal{A})$,
 we set
 \begin{equation*}
  \braket{v,w}
   = \sum_{1 \leq i \leq p}\sum_{1 \leq j \leq q}
   \operatorname{lk}(\phi_{i}v,\phi_{j}w)
   \conj{\nu^{i}(v)\nu^{j}(w)}.
 \end{equation*}

\begin{thm}\label{thm:bracket}
  For any two words $v$ and $w$ in $W(\mathcal{A})$,
  $\braket{v,w}=[|\ell(v)|,|\ell(w)|]$.
 \end{thm}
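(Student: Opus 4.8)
The plan is to mirror the proof of Theorem~\ref{thm:cobracket}, replacing the arc diagram of a single word by the arc diagram of the pair $(v,w)$ and the self-intersections of one loop by the intersection points of $\ell(v)$ with $\ell(w)$. Writing $v=x_{1}\dots x_{p}$ and $w=y_{1}\dots y_{q}$, the first step is to observe that every intersection point of $\ell(v)$ with $\ell(w)$ is displayed in the arc diagram of the pair $(v,w)$, and that each such point $x$ arises as a transversal double crossing of a proper arc $\phi_{i}v$ coming from $v$ with a proper arc $\phi_{j}w$ coming from $w$; conversely each crossing pair $(\phi_{i}v,\phi_{j}w)$ produces exactly one such point. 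This puts $\Gamma(\ell(v),\ell(w))$ in bijection with the set of crossing pairs $(i,j)$, $1\leq i\leq p$, $1\leq j\leq q$.

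Next I would invoke Proposition~\ref{prp:linking}: for a crossing pair meeting at $x$ the sign of the intersection equals $\operatorname{lk}(\phi_{i}v,\phi_{j}w)$, while $\operatorname{lk}(\phi_{i}v,\phi_{j}w)=0$ for every non-crossing pair. Hence only the crossing pairs survive in the double sum defining $\braket{v,w}$, each with coefficient equal to the sign of the corresponding intersection point, which matches the coefficient $\operatorname{sign}(\tilde\alpha_{x},\tilde\beta_{x})$ in the original Goldman bracket. Note that, unlike the cobracket, there is no antisymmetrising difference of tensors, so each intersection contributes a single term.

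The key step, which I expect to be the main obstacle, is the identification of the based-loop product. Basing $\ell(v)$ at $x$ and reading along the orientation starting through the gate $x_{i+1}$, one sees that the based loop spells the word $\nu^{i}(v)=x_{i+1}\dots x_{p}x_{1}\dots x_{i}$, and likewise $\ell(w)$ based at $x$ spells $\nu^{j}(w)$; their concatenation at $x$ then represents $\nu^{i}(v)\nu^{j}(w)$, so $\conj{\tilde\alpha_{x}\tilde\beta_{x}}=\conj{\nu^{i}(v)\nu^{j}(w)}$. The care required here lies in the orientation conventions on $\partial_{0}D$ and in the ordering rule for coincident letters of $v$ and $w$ in the pair diagram, which must be checked to guarantee both that the word read off the diagram is exactly $\nu^{i}(v)\nu^{j}(w)$ and that the local crossing sign is read consistently with the definition of $[\,\cdot,\cdot\,]$. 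Substituting these identifications term by term then converts the double sum into the sum over $x\in\Gamma(\ell(v),\ell(w))$ in the original definition, yielding $\braket{v,w}=[\conj{\ell(v)},\conj{\ell(w)}]$.
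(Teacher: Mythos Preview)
Your proposal is correct and follows essentially the same approach as the paper: the paper's own proof simply notes that all intersection points of $\ell(v)$ with $\ell(w)$ appear in the arc diagram of the pair $(v,w)$ as crossings of some $\phi_{i}v$ with some $\phi_{j}w$, and then declares the remainder identical to the proof of Theorem~\ref{thm:cobracket}. You have spelled out the details of that remainder (the bijection with crossing pairs, the use of Proposition~\ref{prp:linking}, and the identification $\conj{\tilde\alpha_{x}\tilde\beta_{x}}=\conj{\nu^{i}(v)\nu^{j}(w)}$) more explicitly than the paper does, but the argument is the same.
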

 \begin{proof}
  The all intersection points of $\ell(v)$ with $\ell(w)$ are displayed
  in the arc diagram of the pair $v$ and $w$
  as the intersection of proper arcs in $\{\phi_{i}(v)\}_{1 \leq i \leq p}$ with
  proper arcs in $\{\phi_{j}(w)\}_{1 \leq j \leq q}$.
  The rest of this proof is the same as the proof of Theorem \ref{thm:cobracket}.
 \end{proof}
\end{dfn}

In the same way as 
in Remark \ref{rmk:well-definedness},
we may see the well-definedness of $\braket{}$
as the map
from the direct product of the set of cyclic words with itself
to $\mathbb{Z}\hat{\pi}$
directly from our definition.



\end{document}